\theoremstyle{plain}
\newtheorem{prop}{Proposition}
\newtheorem{lemma}[prop]{Lemma}
\newtheorem*{theorem}{Theorem}
\newtheorem*{cor}{Corollary}
\newtheorem*{hc}{Hypercontractivity Theorem}
\theoremstyle{definition} 
\newtheorem*{def*}{Definition}
\theoremstyle{remark}
\newtheorem*{remark}{Remark}
\newtheorem*{acks}{Acknowledgements}
\newcommand{\R}{\mathbb{R}}
\title[Admissible decomposition on Gaussian $L^p$]{Admissible decomposition for spectral multipliers on Gaussian $L^p$}
\author{Mikko Kemppainen}
\begin{document}

\begin{abstract}
  This paper concerns harmonic analysis of
  the Ornstein--Uhlenbeck operator $L$ on the Euclidean space.
  We examine the method of decomposing a spectral multiplier
  $\phi(L)$ into three parts according to the notion of
  admissibility, which quantifies the doubling behaviour of the
  underlying Gaussian measure $\gamma$.
  We prove that the above-mentioned admissible decomposition
  is bounded in $L^p(\gamma)$ for $1 < p \leq 2$ in a certain sense
  involving the Gaussian conical square function.
  The proof relates admissibility with 
  E. Nelson's hypercontractivity theorem in a novel way.
\end{abstract}

\subjclass[2010]{42B25 (Primary); 42B15, 42B30 (Secondary)}
\keywords{conical square function, admissibility function, hypercontractivity, Gaussian measure}

\maketitle


\section{Introduction}

\subsection{General background}
This article is a continuation of \cite{Kem16}, regarding analysis of the \emph{Ornstein--Uhlenbeck operator}
\begin{equation*}
  L = -\frac{1}{2} \Delta + x\cdot\nabla ,
\end{equation*}
which on the Euclidean space $\mathbb{R}^n$ is associated with the \emph{Gaussian measure}
\begin{equation*}
  d\gamma (x) = \pi^{-n/2} e^{-|x|^2} \, dx .
\end{equation*}

In \cite{Kem16}, a certain class of spectral multipliers 
$\phi(L)$ was studied by means of an \emph{admissible decomposition}
--- an integral representation, which takes into account the
non-doubling behaviour of $\gamma$. This representation allows us to
express the multiplier as a sum of three parts (admissible, 
intermediate, and non-admissible):
\begin{equation*}
  \phi(L)f = c(\pi_1u + \pi_2f + \pi_3f) ,
\end{equation*}
where $c$ is a constant and $u$ arises from $f$. An $L^1$-estimate 
was then obtained in terms of an 
\emph{admissible conical square function} $Sf$, namely,
\begin{equation*}
  \| \pi_1u \|_1 \lesssim \| Sf \|_1 , \quad 
  \| \pi_2f \|_1 \lesssim \| f \|_1 , \quad 
  \| \pi_3f \|_1 \lesssim \| (1 + \log_+ |\cdot |) \, Mf \|_1 ,
\end{equation*}
but the third estimate with a logarithmic weight and a maximal
function $Mf$ is clearly unsatisfactory. This shortcoming 
calls into question
whether the admissible decomposition is at all suitable 
for studying boundedness of spectral multipliers.
On the other hand, such problems do not seem to
appear in \cite{Por14}, from which the decomposition
originates in connection with the Riesz transform $\nabla L^{-1/2}$.

The role of this article is to justify the above-mentioned
approach, and to serve
as an intermediate step towards a fully satisfactory $L^1$-estimate. 
Indeed, we show here that for $1 < p \leq 2$ we have
\begin{equation*}
  \| \pi_1u \|_p \lesssim \| Sf \|_p , \quad 
  \| \pi_2f \|_p \lesssim \| f \|_p , \quad 
  \| \pi_3f \|_p \lesssim \| f \|_p .
\end{equation*}
Interestingly, the proof of the third estimate 
invokes the hypercontractivity theorem of E. Nelson 
\cite{Nel73},
and relies on its subtle interplay with the concept of admissibility. 
The ultimate aim of this square function 
approach is to provide a metric theory of Gaussian Hardy spaces
to complement the existing atomic theory \cite{MM07}.

\subsection{Admissible conical square function}

Recall that the \emph{admissibility function}
\begin{equation*}
  m(x) = \min (1 , |x|^{-1}) , \quad x\in\mathbb{R}^n ,
\end{equation*}
quantifies the extent to which $\gamma$ is doubling:
\begin{equation*}
  \gamma(B(x,2t)) \lesssim \gamma(B(x,t)) , \quad t\leq m(x) .
\end{equation*}
See \cite{MM07,MvNP12,AK15} for more details.

The admissible conical square function is then defined by
\begin{equation*}
  Sf(x) = \Big( \int_0^{2m(x)} \frac{1}{\gamma (B(x,t))} \int_{B(x,t)} |t^2L e^{-t^2L}f(y)|^2 
  \, d\gamma (y) \, \frac{dt}{t} \Big)^{1/2} , \quad x\in\mathbb{R}^n,
\end{equation*}
where the diffusion semigroup
\begin{equation*}
  e^{-tL}f(x) = \int_{\mathbb{R}^n} M_t(x,y) f(y) \, d\gamma (y) , \quad t>0 ,
\end{equation*}
is given by the \emph{Mehler kernel}
\begin{equation*}
  M_t(x,y) = \frac{1}{(1-e^{-2t})^{n/2}}
  \exp \Big( -\frac{e^{-t}}{1-e^{-2t}} |x-y|^2 \Big)
  \exp \Big( \frac{e^{-t}}{1+e^{-t}} (|x|^2 + |y|^2) \Big) .
\end{equation*}
The origins of this Gaussian square function can be found in
\cite{MvNP11,Por14}. The
Ornstein--Uhlenbeck semigroup $(e^{-tL})_{t>0}$ is a prototypical
example of a symmetric, contractive, and conservative diffusion
semigroup in the sense of \cite{Ste70}.
For more information, see the (old, but not obsolete) survey
\cite{Sjo97}.

\subsection{Class of spectral multipliers}
\label{phiclass}
 
We will consider spectral multipliers of the form
\begin{equation*}
  \phi(\lambda) = \int_0^\infty \Phi(t) (t\lambda)^2 e^{-t\lambda} \,
  \frac{dt}{t} , \quad \lambda\geq 0,
\end{equation*}
where $\Phi : (0,\infty) \to \mathbb{C}$ is twice continuously differentiable and satisfies
\begin{equation}
\label{phibounds}
  \sup_{0<t<\infty} (|\Phi(t)| + t|\Phi'(t)|) + \sup_{0<t\leq 1} |t^2 \Phi''(t)| < \infty .
\end{equation}
As explained in \cite{Kem16}, these are a special kind of
`Laplace transform type' multipliers.

Moreover, we will refer to the following two extra conditions.
\begin{itemize}
\item Condition D:
\begin{equation*}
  \int_1^\infty (|\Phi'(t)| + t|\Phi''(t)|) \, dt < \infty .
\end{equation*}
\item Condition P: There exists an integer $N$ such that
\begin{equation*}
  |\Phi'(t)| + t|\Phi''(t)| \lesssim t^N, \quad t\geq 1 .
\end{equation*}
\end{itemize}

Notice, however, that the main result is already interesting
for the prototypical \emph{imaginary powers} $\phi(L) = L^{i\tau}$,
$\tau\in\R$, with $\Phi(t) = t^{-i\tau} / \Gamma (2 - i\tau)$
(or for their damped versions with $\Phi(t) = t^{-i\tau} \chi(t)$,
where $\chi$ is a smooth cutoff with 
$1_{(0,1]} \leq \chi \leq 1_{(0,2]}$).

\subsection{Admissible decomposition}
The analysis is greatly simplified by switching to the
discretized version of the admissibility function
\begin{equation*}
  \widetilde{m}(x) =
  \begin{cases}
    1, &|x|<1, \\
    2^{-k}, &2^{k-1} \leq |x| < 2^k , \quad k\geq 1,
  \end{cases}
\end{equation*}
and to the associated \emph{admissible region}
$D = \{ (y,t)\in \mathbb{R}^n \times (0,\infty) : 0 < t < \widetilde{m}(y) \}$.

Let then $\phi$ and $\Phi$ be as in Subsection \ref{phiclass} 
and let $f$ be a polynomial with $\int f \, d\gamma = 0$.
The special form of our spectral multipliers allows us to use
the following integral representation with $\delta, \delta' > 0$
and $\kappa \geq 1$:
\begin{equation}
\label{admdec}
  \begin{split}
  \phi(L)f &= c_{\delta,\delta'} \int_0^\infty \Phi((\delta'+\delta)t^2) (t^2L)^2 
               e^{-(\delta'+\delta)t^2L} f \, \frac{dt}{t} \\
    &= c_{\delta,\delta'} \Big( \int_0^{\widetilde{m}(\cdot)/\kappa} \widetilde{\Phi}(t^2) t^2L
               e^{-\delta' t^2L} u(\cdot , t) \, \frac{dt}{t} \\
    &\quad\quad + \int_0^{\widetilde{m}(\cdot)/\kappa} \widetilde{\Phi}(t^2) t^2L
               e^{-\delta' t^2L} (1_{D^c}(\cdot , t) t^2L e^{-\delta t^2L}f) \, 
               \frac{dt}{t} \\
    &\quad\quad + \int_{\widetilde{m}(\cdot)/\kappa}^\infty \widetilde{\Phi}(t^2) (t^2L)^2 
               e^{-(\delta'+\delta)t^2L}f \, \frac{dt}{t} \Big) \\
    &=: c_{\delta,\delta'} ( \pi_1 u + \pi_2 f + \pi_3 f ),
    \end{split}
\end{equation}
where $u(\cdot , t) = 1_D(\cdot , t) t^2L e^{-\delta t^2L}f$
and $\widetilde{\Phi} (t) = \Phi ((\delta'+\delta)t)$.
The role of the technical parameters $\delta, \delta'$ and $\kappa$
is more visible in \cite{Kem16} than in this paper.

\subsection{Main result}

The first part of Proposition \ref{pi3} refines the previous
analysis of $\pi_3$ from \cite{Kem16}, and shows that the maximal
operator 
\begin{equation*}
  Mf(x) = \sup_{\varepsilon m(x)^2 < t \leq 1} |e^{-tL}f(x)|, \quad x\in\mathbb{R}^n ,
\end{equation*}
can be disposed of, i.e. that
\begin{equation*}
  \| \pi_3 f \|_1 \lesssim \| (1 + \log_+ |\cdot |) \, f \|_1 ,
\end{equation*}
for multipliers satisfying Condition D. As a consequence,
for all $f\in L^1(\gamma)$ it then holds that
\begin{equation*}
  \| \phi (L) f \|_1 \lesssim \| Sf \|_1 + \| (1 + \log_+ |\cdot |) \, f \|_1 .
\end{equation*}

The second part of Proposition \ref{pi3} 
(together with Propositions 2 and 4) leads to the main result of
the article:

\begin{theorem}
  Let $1 < p \leq 2$. For multipliers satisfying Condition P,
  there exist values of 
  parameters $\delta, \delta'$ and $\kappa$ so that
  \begin{equation*}
    \| \pi_1 u \|_p \lesssim \| Sf \|_p, \quad \| \pi_2 f \|_p \lesssim \| f \|_p, \quad \| \pi_3 f \|_p \lesssim \| f \|_p .
  \end{equation*}
\end{theorem}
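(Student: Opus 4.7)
The theorem is the conjunction of three separate bounds, one per term of the admissible decomposition \eqref{admdec}. The first two are of a generic tent-space flavour; the genuinely new ingredient is the $\pi_3$ bound, where Condition P and Nelson's hypercontractivity interact. I would attack them in this order.

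For $\pi_1$, I would view $u(\cdot,t) = 1_D(\cdot,t)\, t^2L e^{-\delta t^2L}f$ as a space--time function on the admissible region $D$, whose Gaussian tent-space $T^p(\gamma)$-norm is precisely $\|Sf\|_p$, and regard $\pi_1$ as the natural averaging operator $T^p(\gamma) \to L^p(\gamma)$. At $p=2$ this reduces by Plancherel to a quadratic estimate controlled by the uniform boundedness of $\widetilde{\Phi}$ from \eqref{phibounds}; for $1 < p < 2$ I would pass to the atomic decomposition of the Gaussian tent space on $D$ --- note that $\widetilde{m}$ is designed precisely so that $D$ is locally doubling --- and run the standard single-atom test via off-diagonal decay of the semigroup. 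The $\pi_2$ bound follows from the same mechanism applied to $v(\cdot,t) = 1_{D^c}(\cdot,t)\, t^2L e^{-\delta t^2L}f$: here the constraint $\widetilde{m}(y) \leq t$ allows me to dominate the tent norm of $v$ pointwise by $\|f\|_p$ using Mehler-kernel estimates off the admissible region, where the exponential factor $\exp(\tfrac{e^{-t}}{1+e^{-t}}|x|^2)$ is absorbed on passing to the Gaussian measure.

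The $\pi_3$ estimate is the heart of the argument. The starting point is the differential identity
\[
(t^2L)^2 e^{-(\delta+\delta')t^2L} = \frac{(t\partial_t)^2 - 2(t\partial_t)}{4(\delta+\delta')^2}\, e^{-(\delta+\delta')t^2L},
\]
which follows from $t\partial_t e^{-ct^2L} = -2c\, t^2L e^{-ct^2L}$. Integrating by parts twice in $t$ over $[\widetilde{m}(x)/\kappa, \infty)$ in the definition of $\pi_3 f(x)$, the boundary term at $\infty$ vanishes because $f$ has zero Gaussian mean, while the boundary term at $t = \widetilde{m}(x)/\kappa$ is of $\pi_2$-type and folds into that estimate. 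The remaining contributions are pointwise dominated by integrals of the form $|\widetilde{\Phi}^{(k)}(t^2)|\, t^{2k-1}\, |e^{-(\delta+\delta')t^2L}f(x)|$, $k=1,2$, where by Condition P the prefactor is only polynomial in $t$ for $t \geq 1$. Here enters hypercontractivity: Nelson's theorem gives
\[
\bigl\|e^{-(\delta+\delta')t^2L}f\bigr\|_{q(t)} \leq \|f\|_p, \qquad q(t) = 1 + (p-1)\, e^{2(\delta+\delta')t^2},
\]
so once $\delta + \delta'$ is chosen large enough to push the hypercontractive threshold before any fixed $t_0$, the polynomial weight is absorbed against the exponentially improving $L^p \to L^{q(t)}$ gain, via duality with a test functional in $L^{p'}(\gamma)$.

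The main obstacle is reconciling the $x$-dependent lower limit $\widetilde{m}(x)/\kappa$ with the fact that the hypercontractive gain is quantitative only when $t$ is bounded below. On $\{|x| \leq R\}$ the lower limit is uniformly positive and the scheme above applies directly; on $\{|x| > R\}$ the lower limit tends to $0$, but the Gaussian density $e^{-|x|^2}$ itself decays rapidly and the polynomial $t^{2N}$ loss must be balanced against this Gaussian mass budget. The parameters $\delta$, $\delta'$, $\kappa$ are chosen to execute precisely this balance --- which is why the theorem is stated as the existence of admissible parameters rather than uniform in them.
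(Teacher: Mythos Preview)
Your treatment of $\pi_3$ misidentifies both where hypercontractivity enters and what it buys. You propose to use Nelson's theorem for $t\geq 1$ to absorb the polynomial weight $t^{2N}$ coming from Condition P, but hypercontractivity only improves the \emph{target exponent}: from $\|e^{-tL}f\|_{q(t)}\leq\|f\|_p$ and $\gamma(\mathbb{R}^n)=1$ one recovers at best $\|e^{-tL}f\|_p\leq\|f\|_p$, with no decay in $t$ whatsoever. The range $t\geq 1$ is in fact handled by the spectral gap: since $f$ has zero mean, interpolating $\|e^{-tL}(I-E_0)\|_{2\to 2}=e^{-t}$ against $\|e^{-tL}(I-E_0)\|_{1\to 1}\leq 1$ gives $\|e^{-tL}f\|_p\lesssim e^{-\theta_p t}\|f\|_p$, which beats any polynomial. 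Condition P is invoked only here, and hypercontractivity plays no role. Relatedly, your proposal to take $\delta+\delta'$ large is incompatible with the off-diagonal Mehler estimates that both $\pi_2$ and the off-diagonal portion of $\pi_3$ require; the paper needs $8(\delta+\delta')\leq 4^{-3}$.

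The genuine obstacle lies in the range $\widetilde{m}(x)^2/\kappa^2\leq t\leq 1$, where the enemy is not $t^{2N}$ but the logarithmic divergence $\int_{4^{-k}/\kappa^2}^{1}\frac{dt}{t}\sim k$ on the annulus $C_k=\{2^{k-1}\leq |x|<2^k\}$. After an on/off-diagonal splitting in $f$, the on-diagonal piece is where hypercontractivity is actually used, and the mechanism is H\"older against the smallness of $\gamma(C_k)$: partition $[4^{-k}/\kappa^2,1]$ into dyadic blocks $[4^{-k+j}/\kappa^2,4^{-k+j+1}/\kappa^2]$, set $q(k,j)=1+(p-1)e^{2(\delta+\delta')4^{-k+j}/\kappa^2}$, and note that $\tfrac{1}{p}-\tfrac{1}{q(k,j)}\gtrsim 4^{-k+j}$, so that
\[
\|1_{C_k}e^{-(\delta+\delta')tL}(1_{C_k^*}f)\|_p
\leq \gamma(C_k)^{\frac{1}{p}-\frac{1}{q(k,j)}}\|1_{C_k^*}f\|_p
\lesssim \bigl(e^{-c4^k}\bigr)^{4^{-k+j}}\|1_{C_k^*}f\|_p
= e^{-c4^j}\|1_{C_k^*}f\|_p,
\]
which is summable in $j$. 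This is the quantitative interaction between admissibility and hypercontractivity that the introduction advertises; your ``Gaussian mass budget'' remark points in this direction but does not yet isolate the mechanism. As a smaller matter, the $p=2$ case of $\pi_1$ is not simply Plancherel: the $x$-dependent truncation $t<\widetilde{m}(x)/\kappa$ does not commute with $L$, and the paper's argument passes through duality, a kernel identity, and the semigroup maximal inequality.
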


\begin{cor}
  Let $1 < p \leq 2$. For spectral multipliers $\phi$
  of Subsection \ref{phiclass} satisfying Condition P we have
  \begin{equation*}
    \| \phi (L) f \|_p \lesssim \| Sf \|_p + \| f \|_p .
  \end{equation*}
\end{cor}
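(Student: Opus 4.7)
The statement is a conjunction of three estimates---corresponding to Propositions 2, 4, and \ref{pi3}---so the plan is to prove each separately and then verify that a common choice of the parameters $\delta, \delta', \kappa$ makes all three hold simultaneously. The first two estimates concern operators naturally adapted to the admissible region $D$, while the third needs genuinely new input.

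For $\|\pi_1 u\|_p \lesssim \|Sf\|_p$ I would work in a tent / conical-space framework on $\mathbb{R}^n \times (0,\infty)$. By construction $u(y,t) = 1_D(y,t) t^2 L e^{-\delta t^2 L} f$ is supported in the admissible region, and its natural $T^p$ tent norm is equivalent (up to the parameter $\delta$ and the difference between $m$ and $\widetilde{m}$) to $\|Sf\|_p$. The desired estimate then reduces to proving that $U \mapsto \int_0^{\widetilde{m}(\cdot)/\kappa} \widetilde{\Phi}(t^2) t^2 L e^{-\delta' t^2 L} U(\cdot,t) \, dt/t$ sends $T^p$ to $L^p$ for $1 < p \leq 2$, which follows from standard square-function estimates on the semigroup together with the $\widetilde{\Phi}$-bounds from \eqref{phibounds}. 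The second estimate $\|\pi_2 f\|_p \lesssim \|f\|_p$ runs along the same lines, with the indicator $1_{D^c}$ placing the inner factor $t^2 L e^{-\delta t^2 L} f$ outside the admissible region, so that off-diagonal decay of the Mehler kernel compensates for the possible non-doubling of $\gamma$ and yields an $L^p$-bounded composition.

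The main difficulty lies in the third estimate $\|\pi_3 f\|_p \lesssim \|f\|_p$: the integration in $\pi_3 f(x)$ starts at the $x$-dependent time $\widetilde{m}(x)/\kappa$, which shrinks as $|x| \to \infty$, while Condition P only guarantees polynomial growth of $\widetilde{\Phi}$. My plan is to decompose $\mathbb{R}^n$ dyadically into the ball $\{|x|<1\}$ and the annuli $A_k = \{2^{k-1} \leq |x| < 2^k\}$, $k \geq 1$, on each of which $\widetilde{m}$ is the constant $2^{-k}$, and to estimate $1_{A_k} \pi_3 f$ in $L^p$ separately. On $A_k$ the $t$-integration runs from $2^{-k}/\kappa$ to $\infty$, and for the resulting long-time integral I would invoke Nelson's hypercontractivity theorem, which upgrades $e^{-ct^2 L}$ from $L^p$ to $L^{q(t)}$ with $q(t) \to \infty$ as $t$ grows; this improvement produces just enough decay to dominate the polynomial factor $|\widetilde{\Phi}(t^2)| \lesssim t^{2N}$ from Condition P. The delicate point---and the source of the interplay between admissibility and hypercontractivity advertised in the introduction---is that $\kappa$ must be chosen large enough that the starting time $2^{-k}/\kappa$ on each annulus already lies in the regime where hypercontractivity offers a useful gain, so that summation over $k$ yields a convergent series. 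Calibrating $\kappa$ against the exponent $N$ of Condition P is, I expect, the main technical obstacle.
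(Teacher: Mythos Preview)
Your treatment of $\pi_1$ and $\pi_2$ is broadly in line with the paper (tent-space boundedness for $\pi_1$, Mehler off-diagonal decay for $\pi_2$), though for $\pi_1$ the paper actually obtains the $T^p\to L^p$ bound by interpolating between an explicit $T^2\to L^2$ duality argument and the known $T^1\to L^1$ result, rather than by ``standard square-function estimates''.

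For $\pi_3$, however, there is a genuine gap in your plan, stemming from a misreading of Condition~P and of what hypercontractivity is used against. Condition~P bounds $|\Phi'(t)|+t|\Phi''(t)|$ polynomially for $t\ge 1$; the function $\Phi$ itself (hence $\widetilde\Phi$) is \emph{bounded} by the standing assumption~\eqref{phibounds}. In the paper, an integration-by-parts pointwise estimate first replaces $\pi_3 f$ by boundary terms plus $\int_{\widetilde m(\cdot)^2/\kappa^2}^\infty(|\Phi'|+t|\Phi''|)|e^{-ctL}f|\,dt$. The large-time range $t\ge 1$ is then handled not by hypercontractivity but by the \emph{spectral gap}: interpolating $\|e^{-tL}(I-E_0)\|_{2\to 2}=e^{-t}$ with the $L^1$ contraction gives $\|e^{-tL}f\|_p\lesssim e^{-\theta_p t}\|f\|_p$, and this exponential decay absorbs the polynomial $t^N$ from Condition~P. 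So there is no calibration of $\kappa$ against $N$; that is a red herring.

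Hypercontractivity enters only for the intermediate range $t\in[\widetilde m(\cdot)^2/\kappa^2,1]$, where the integrand is bounded and the enemy is the logarithmic length $\sim k$ of the interval on the $k$th annulus $C_k$. The mechanism you have not identified is: after localising $f$ to $C_k^*$ (the off-diagonal piece being handled by kernel estimates), one partitions the time interval dyadically as $t\sim 4^{-k+j}/\kappa^2$, applies H\"older on $C_k$ to pass from $L^{q(k,j)}$ to $L^p$, and uses hypercontractivity to bound $\|e^{-ctL}(1_{C_k^*}f)\|_{q(k,j)}\le\|1_{C_k^*}f\|_p$. The gain comes from the factor $\gamma(C_k)^{1/p-1/q(k,j)}$: since $\gamma(C_k)\lesssim e^{-c4^k}$ and $1/p-1/q(k,j)\gtrsim 4^{-k+j}$, this yields $e^{-c4^j}$, which is summable in $j$ and removes the logarithmic loss. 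Merely knowing that $e^{-ct^2L}$ maps $L^p\to L^{q(t)}$ does not by itself produce decay in $L^p$; it is the combination with the Gaussian smallness of the annuli that does the work.
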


Such spectral multipliers are well known to be bounded on 
$L^p(\gamma)$ for all $1 < p < \infty$,
also in vastly more general settings
\cite{Ste70,Cow83,CD13}.
The vertical square function
that is typically used in their analysis seems, however,
to be somewhat ill-suited for $p=1$ and the corresponding
Hardy space theory. Developments of an abstract semigroup approach
to Hardy spaces nevertheless exist, see \cite{Mei08,JMP14}.
Recall also the relations between vertical and conical objects
in \cite[Proposition 2.1]{AHM12}, 
showing how conical square functions
dominate the vertical ones for $p\leq 2$. 
Moreover, it is curious to note that
a \emph{local} square function such as ours is sufficient
for the analysis of an operator with a spectral gap 
(between the lowest two eigenvalues in 
$\sigma (L) = \{ 0,1,2,\ldots \}$).
The intriguing question whether $\| Sf \|_p \lesssim \| f \|_p$
for $p > 1$ is a topic of ongoing research.

\section{Proof}

Throughout the proof we assume that $f$ is a polynomial with
$\int f \, d\gamma = 0$, and therefore a finite linear combination
of \emph{Hermite polynomials} --- the eigenfunctions of $L$.
The three parts of the admissible decomposition \eqref{admdec}
are studied separately in the following three subsections.

\subsection{Admissible part}

Let us first recall the definition of tent spaces (see \cite{AK15,MvNP12}).

\begin{def*}
Let $1\leq p \leq 2$. The \emph{Gaussian tent space} $\mathfrak{t}^p(\gamma)$ is defined to consist 
of functions $u$ on the admissible region 
$D = \{ (y,t)\in \mathbb{R}^n \times (0,\infty) : 0 < t < \widetilde{m}(y) \}$ 
for which
  \begin{equation*}
    \| u \|_{\mathfrak{t}^p(\gamma)} = \Big( \int_{\mathbb{R}^n} \Big( 
    \iint_{\Gamma (x)} |u(y,t)|^2 \, \frac{d\gamma (y) \, dt}{t\gamma (B(y,t))} 
    \Big)^{p/2} d\gamma (x) \Big)^{1/p} < \infty .
  \end{equation*}
  Here $\Gamma (x) = \{ (y,t)\in D : |y-x| < t \}$ is the admissible 
  cone at $x\in \mathbb{R}^n$.
\end{def*}

Consider the admissible part 
\begin{equation*}
  \pi_1 u = \int_0^{\widetilde{m}(\cdot)/\kappa} \widetilde{\Phi}(t^2) t^2L
               e^{-\delta' t^2L} u(\cdot , t) \, \frac{dt}{t}
\end{equation*}
for functions $u$ in a Gaussian tent space.

Curiously, due to the non-uniformity of the admissibility function,
the case $p=2$ is not quite as straightforward as one might expect.

\begin{prop}
  \label{pi1lemma}
  For $\kappa \geq 1$ and $0 < \delta' \leq 1$ we have
  $\| \pi_1 u \|_2 \lesssim \| u \|_{\mathfrak{t}^2(\gamma)}$.
\end{prop}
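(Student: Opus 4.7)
The plan is to proceed by $L^2$-duality. For $g\in L^2(\gamma)$, using the self-adjointness of $Q_t := t^2 L e^{-\delta' t^2 L}$ on $L^2(\gamma)$ and the Fubini identity $\|u\|_{\mathfrak{t}^2(\gamma)}^2 = \iint_D |u|^2\,d\gamma\,dt/t$, Cauchy--Schwarz together with the uniform bound on $\widetilde{\Phi}$ reduces the proposition to
\begin{equation*}
J(g) := \int_0^\infty \int_{\{y:\,t<\widetilde{m}(y)\}} |Q_t g_t(y)|^2\,\frac{d\gamma(y)\,dt}{t} \lesssim \|g\|_2^2,
\end{equation*}
where $g_t := 1_{\{t<\widetilde{m}/\kappa\}} g$ is a $t$-dependent truncation of $g$.

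The natural approach is to split $g_t = g - (g - g_t)$ and bound each piece separately. The contribution from $g$ is controlled by the standard vertical square function estimate $\int_0^\infty \|Q_t g\|_2^2\,dt/t \lesssim \|g\|_2^2$, which is immediate from the spectral theorem on $L^2(\gamma)$, since $\int_0^\infty |t^2\lambda e^{-\delta' t^2\lambda}|^2\,dt/t$ is bounded uniformly in $\lambda>0$.

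The main obstacle is the remainder: $Q_t$ applied to $1_{\{\widetilde{m}/\kappa \le t\}} g$, evaluated on $\{y: t < \widetilde{m}(y)\}$. For $\kappa \ge 1$, these two regions are (essentially) disjoint, with spatial separation $\sim 1/t$. The Mehler kernel of $Q_t$ furnishes Gaussian off-diagonal decay like $\exp(-c|x-y|^2/(\delta' t^2))$, but one must weigh this against the growth factor $\exp(\frac{e^{-\delta' t^2}}{1+e^{-\delta' t^2}}(|x|^2+|y|^2))$ present in the Mehler kernel. The constraint $\delta' \le 1$ keeps the semigroup time $\delta' t^2$ at or below the admissibility scale, ensuring that the Gaussian decay dominates. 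A dyadic annular decomposition of $\mathbb{R}^n$ combined with summation over scales then yields the bound $J(g) \lesssim \|g\|_2^2$ and completes the proof.
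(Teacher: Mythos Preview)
Your duality reduction and the treatment of the main term $Q_tg$ via the spectral theorem are fine; the gap is in the remainder. The claim that the support of $1_{\{\widetilde{m}/\kappa\le t\}}g$ and the evaluation region $\{y:t<\widetilde{m}(y)\}$ are separated by $\sim 1/t$ is false. Since $\widetilde{m}(x)\sim 1/|x|$, the former is $\{|x|\gtrsim 1/(\kappa t)\}$ and the latter is $\{|y|\lesssim 1/t\}$; for $\kappa=1$ these are exactly complementary with zero separation, and for $\kappa>1$ they \emph{overlap} on an annulus of thickness comparable to $1/t$. Hence the Mehler off-diagonal factor $\exp(-c|x-y|^2/(\delta' t^2))$ buys nothing near the boundary, and a naive bound loses $\int_{\widetilde m(x)/\kappa}^1 dt/t\sim\log_+|x|$. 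The sketch can be repaired---for each support annulus only $O_\kappa(1)$ dyadic $t$-scales lack separation, while for later scales the evaluation region has receded enough that a $\pi_2$-type off-diagonal estimate applies---but this is substantially more work than indicated, and one must also address the $\ell^1$-versus-$\ell^2$ issue when summing the annular pieces.

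The paper sidesteps the split $g_t=g-(g-g_t)$ altogether. After the same reduction to $\int_0^1\|Q_t(\chi_t g)\|_2^2\,dt/t$ (without even using the restriction to $D$), it invokes the uniform $L^2$-bound on $(t^2L)^{1/2}e^{-\delta' t^2L/2}$ together with self-adjointness to obtain $\|Q_t(\chi_t g)\|_2^2\lesssim\langle Q_t(\chi_t g),\chi_t g\rangle$. Writing $Q_t$ via the kernel $(2\delta')^{-1}t\,\partial_t M_{\delta' t^2}$, the $t$-integral then collapses by the fundamental theorem of calculus to the \emph{positive} Mehler kernel $M_{\delta'(m(x)\wedge m(y))^2}(x,y)$, which is dominated pointwise by $\sup_{s>0}e^{-sL}|g|$ and handled by the maximal inequality. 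This argument is shorter, avoids all off-diagonal bookkeeping, and---as the paper remarks---works for any cutoff taking values in $(0,1]$, independently of $\kappa$ and of the specific shape of $\widetilde{m}$.
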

  \begin{proof}
    The proof does not rely on admissibility in the sense that 
    $\widetilde{m}(x)/\kappa$ can be replaced by any
    function with values in $(0,1]$. 
    Hence we may abbreviate $\widetilde{m}(x) / \kappa = m(x)$.    
     
    Write $\chi_t(x) = 1_{(0,m(x))}(t)$.
    Given a $g\in L^2(\gamma)$, we argue by duality:
    \begin{equation*}
    \begin{split}
      |\langle \pi_1 u , g \rangle | 
      &= \Big| \int_{\R^n} \int_0^{m(\cdot)} \widetilde{\Phi}(t^2) t^2L e^{-\delta' t^2L}u(\cdot , t) \,
      \frac{dt}{t} \, g \, d\gamma \Big| \\
      &= \Big| \int_0^1 \widetilde{\Phi}(t^2) \int_{\R^n} t^2L e^{-\delta' t^2L}u(\cdot , t)
      \chi_t g \, d\gamma \, \frac{dt}{t} \Big| \\
      &= \Big| \int_0^1 \int_{\R^n} u(\cdot ,t) t^2L e^{-\delta' t^2L} 
      (\chi_t g) \, d\gamma \, \frac{dt}{t} \Big| \\
      &\leq \| u \|_{\mathfrak{t}^2(\gamma)} 
      \Big( \int_0^1 \| t^2L e^{-\delta' t^2L} (\chi_t g) \|_2^2 \, \frac{dt}{t} \Big)^{1/2} ,
      \end{split}
    \end{equation*}
    and so it suffices to show that
    \begin{equation*}
      \Big( \int_0^1 \| t^2L e^{-\delta' t^2L} (\chi_t g) \|_2^2 \, \frac{dt}{t} \Big)^{1/2} \lesssim \| g \|_2 .
    \end{equation*}
    
    Now the uniform $L^2$-boundedness of $(t^2L)^{1/2} e^{-\frac{\delta'}{2}t^2L}$ guarantees that
    \begin{equation*}
    \begin{split}
      \| t^2L e^{-\delta' t^2L} (\chi_t g) \|_2
      &\lesssim \| (t^2L)^{1/2} e^{-\frac{\delta'}{2} t^2L} (\chi_t g) \|_2 \\
      &= \int_{\R^n} t^2L e^{-\delta' t^2L} (\chi_t g) \,
      \overline{\chi_t g} \, d\gamma ,
      \end{split}
    \end{equation*}
    where the last step relies on the self-adjointness and 
    non-negativity of $(t^2L)^{1/2}e^{-\delta' t^2L}$.
    Expressing $t^2Le^{-\delta' t^2L}$ in terms of the kernel 
    $(2\delta')^{-1}t\partial_t M_{\delta' t^2}(x,y)$ we therefore see
    that
    \begin{equation*}
    \begin{split}
      &\int_0^1 \int_{\R^n} |t^2L e^{-\delta' t^2L} (\chi_t g)|^2 \, d\gamma \, \frac{dt}{t} \\
      &\lesssim \Big| \int_0^1 \int_{\R^n} 
      \int_{\R^n} t \partial_t M_{\delta' t^2}(x,y)
      1_{(0,m(y))}(t)g(y) \, d\gamma (y) \, 1_{(0,m(x))}(t)
      \overline{g(x)} \, d\gamma (x)\, \frac{dt}{t} \Big| \\
      &= \Big| \int_{\R^n}\overline{g(x)} \int_{\R^n} g(y) 
      \int_0^{m(x) \wedge m(y)} \partial_t M_{\delta' t^2}(x,y)
      \, dt \, d\gamma (y) \, d\gamma (x) \Big| \\
      &= \Big| \int_{\R^n}\overline{g(x)} \int_{\R^n} 
      M_{\delta' (m(x)\wedge m(y))^2}(x,y)
      g(y) \, d\gamma (y) \, d\gamma (x) \Big| \\
      &\leq \Big| \int_{\R^n}\overline{g(x)} 
      \int_{\{ y: m(y)\leq m(x) \}} 
      M_{\delta' m(y)^2}(x,y)
      g(y) \, d\gamma (y) \, d\gamma (x) \Big| \\
      &\quad + \Big| \int_{\R^n} g(y) \int_{\{ x: m(x)\leq m(y) \}} 
      M_{\delta' m(x)^2}(y,x)
      \overline{g(x)} \, d\gamma (x) \, d\gamma (y) \Big| \\
      &\leq \int_{\R^n} |g(x)| \, \sup_{t>0} e^{-tL} |g|(x) \, 
      d\gamma (x)
      + \int_{\R^n} |g(y)| \, \sup_{t>0} e^{-tL} |g|(y) \, 
      d\gamma (y) \\
      &\leq 2 \int_{\R^n} (\sup_{t>0} e^{-tL}|g| )^2 \, d\gamma \\
      &\lesssim \| g \|_2^2 ,
      \end{split}
    \end{equation*}
    where in the last step we made use of the maximal inequality.
    This finishes the proof.
  \end{proof}
  
\begin{prop}
  \label{pi1}
  Let $1 < p \leq 2$.
  For $\kappa \geq 1$ and sufficiently small $\delta' > 0$, we have
  $\| \pi_1 u \|_p \lesssim \| u \|_{\mathfrak{t}^p(\gamma)}$. 
  Moreover, for $0 < \delta \leq 1$, the function 
  $u(\cdot , t) = 1_D(\cdot , t) t^2L e^{-\delta t^2L} f$ satisfies 
  $\| u \|_{\mathfrak{t}^p(\gamma)} \lesssim \| Sf \|_p$.
\end{prop}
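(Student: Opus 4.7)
The proposition splits into two independent assertions, which I would handle separately.

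\textbf{Step 1: $\|\pi_1 u\|_p \lesssim \|u\|_{\mathfrak{t}^p(\gamma)}$.}
I would interpolate between the $L^2$-endpoint of Proposition~\ref{pi1lemma} and a $\mathfrak{t}^1(\gamma)\to L^1(\gamma)$ endpoint obtained via the atomic decomposition of the Gaussian tent space (cf.~\cite{MvNP12,AK15}). The atoms $a$ are supported in tents $T(B)$ over admissible balls $B=B(x_B,r_B)$, $r_B\leq\widetilde{m}(x_B)$, with $\|a\|_{\mathfrak{t}^2(\gamma)}\leq\gamma(B)^{-1/2}$. Given such $a$ I decompose
\begin{equation*}
\|\pi_1 a\|_1 \leq \|1_{2B}\pi_1 a\|_1 + \|1_{(2B)^c}\pi_1 a\|_1.
\end{equation*}
The local term is handled by Cauchy--Schwarz, doubling of $\gamma$ at admissible scales, and Proposition~\ref{pi1lemma}:
\begin{equation*}
\|1_{2B}\pi_1 a\|_1 \leq \gamma(2B)^{1/2}\|\pi_1 a\|_2 \lesssim \gamma(B)^{1/2}\|a\|_{\mathfrak{t}^2(\gamma)} \lesssim 1.
\end{equation*}
For the non-local term I insert the kernel $(2\delta')^{-1}t\,\partial_t M_{\delta't^2}(x,y)$ of $t^2L e^{-\delta't^2L}$ already exploited in Proposition~\ref{pi1lemma}. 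Since $\mathrm{supp}\,a\subset T(B)$ forces $y\in B$ and $t\leq r_B$, while $x\in(2B)^c$ gives $|x-y|\geq r_B\geq t$, the Gaussian decay factor $\exp(-c|x-y|^2/(\delta't^2))$ is available; the smallness of $\delta'$ is exactly what is needed to make this factor dominate the competing growth factor $\exp(\frac{e^{-\delta't^2}}{1+e^{-\delta't^2}}(|x|^2+|y|^2))$ of the Mehler kernel, once integrated against $d\gamma(x)$ over dyadic annuli of $B$. This yields $\|1_{(2B)^c}\pi_1 a\|_1\lesssim 1$ uniformly in $a$, and real interpolation with the $L^2$-bound produces the claim for $1<p\leq 2$.

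\textbf{Step 2: $\|u\|_{\mathfrak{t}^p(\gamma)} \lesssim \|Sf\|_p$.}
The change of variable $s=\sqrt{\delta}\,t$ turns $t^2L e^{-\delta t^2L}$ into $\delta^{-1}s^2L e^{-s^2L}$, replaces the admissibility cutoff $t<\widetilde{m}(y)$ by $s<\sqrt{\delta}\,\widetilde{m}(y)$, and widens the cone $|y-x|<t$ to $|y-x|<s/\sqrt{\delta}$. The standard change-of-aperture inequality for tent spaces in $L^p(\gamma)$ absorbs the wider cone into a constant depending on $\delta$ (and hence harmless). On $\Gamma(x)$ the admissibility functions at $x$ and $y$ are comparable (a direct consequence of $|y-x|<\widetilde{m}(y)$), so $s\lesssim m(x)$ and $\gamma(B(y,s))\sim\gamma(B(x,s))$; therefore the inner cone integral defining $\|u\|_{\mathfrak{t}^p(\gamma)}$ is dominated pointwise by $Sf(x)^2$, and raising to $p/2$ and integrating against $d\gamma(x)$ finishes the argument.

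\textbf{Main obstacle.} The real work lies in the non-local atomic bound in Step~1. The Mehler kernel carries simultaneously a Gaussian off-diagonal decay in $|x-y|$ and an exponential growth factor in $|x|^2+|y|^2$, and only the smallness of $\delta'$ at admissible scales $t\leq r_B\leq\widetilde{m}(x_B)$ enables the former to overcome the latter. The non-translation-invariance of $\gamma$ makes the summation over dyadic annuli around a Gaussian-admissible ball considerably more delicate than its Euclidean counterpart; this is precisely where matching $\delta'$ against admissibility matters. Step~2, by contrast, is essentially a geometric comparison once the admissibility-comparability lemma is in place.
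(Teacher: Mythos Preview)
Your approach is correct and matches the paper's: interpolation between the $\mathfrak{t}^2\to L^2$ bound of Proposition~\ref{pi1lemma} and a $\mathfrak{t}^1\to L^1$ endpoint for the first assertion, followed by change of aperture and a pointwise cone comparison for the second. The only difference is cosmetic: where you sketch the atomic $L^1$ endpoint directly (local term via Cauchy--Schwarz plus Proposition~\ref{pi1lemma}, non-local term via Mehler kernel off-diagonal decay with small $\delta'$), the paper simply cites this bound from \cite[Proposition~2]{Kem16}, which carries out exactly that argument.
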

\begin{proof}
  The first part of the statement follows by interpolation of
  Gaussian tent spaces 
  \cite[Theorem 3.3 and Corollary 3.5]{AK15}. Indeed,
  \begin{equation*}
    \pi_1 \text{ is bounded } 
    \begin{cases}
      \mathfrak{t}^2(\gamma) \to L^2(\gamma) \quad
      \text{(by Proposition \ref{pi1lemma} above)}, \\
      \mathfrak{t}^1(\gamma) \to L^1(\gamma) \quad
      \text{(by \cite[Proposition 2]{Kem16})} .
    \end{cases}
  \end{equation*}  
  Therefore, $\pi_1$ is also bounded 
  $\mathfrak{t}^p(\gamma) \to L^p(\gamma)$, i.e.
  $\| \pi_1 u \|_p \lesssim \| u \|_{\mathfrak{t}^p(\gamma)}$.
  
  The second part of the statement follows by a straightforward
  modification of the corresponding argument in
  \cite[Proposition 2]{Kem16}. Indeed, by change of aperture on
  $\mathfrak{t}^p(\gamma)$ (see \cite[Theorem 3.3]{AK15})
  we obtain
  \begin{equation*}
    \| u \|_{\mathfrak{t}^p(\gamma)}
    \lesssim \Big( \int_{\R^n}
    \Big( \iint_{\Gamma (x) \cap D'} |s^2Le^{-s^2L}f(y)|^2 \,
    \frac{d\gamma (y) \, ds}{s\gamma(B(y,s))} \Big)^{p/2} d\gamma (x)
    \Big)^{1/p} ,
  \end{equation*}
  where $D' = \{ (y,s)\in\R^n \times (0,\infty) : s < \sqrt{\delta} \widetilde{m}(y) \}$.
  The desired estimate
  $\| u \|_{\mathfrak{t}^p(\gamma)} \lesssim \| Sf \|_p$ 
  now follows from the pointwise inequality
  (see \cite[Proposition 2]{Kem16})
  \begin{equation*}
    \begin{split}
      &\iint_{\Gamma (x) \cap D'} 
      |s^2L e^{-s^2L}f(y)|^2 
      \,\frac{d\gamma (y)\, ds}{s \gamma (B(y,s))} \\
      &\lesssim \int_0^{2m(x)} \frac{1}{\gamma (B(x,s))}\int_{B(x,s)} |s^2L e^{-s^2L}f(y)|^2 
  \, d\gamma (y) \, \frac{ds}{s} , \quad x\in\R^n .
    \end{split}
    \end{equation*}
\end{proof}

\subsection{Intermediate part}

Let us begin by presenting two $L^p$-estimates for
the operators $tLe^{-tL}$.
\begin{lemma}
\label{Lpest}
  The family $(tLe^{-tL})_{t>0}$ is uniformly 
  bounded on $L^p(\gamma)$ for all $p > 1$, that is,
  \begin{equation*}
    \sup_{t>0} \| tL e^{-tL} \|_{p\to p} < \infty . 
  \end{equation*}
  Moreover, for $1\leq p \leq 2$ we have
  \begin{equation*}
    \| 1_{E'} tL e^{-tL} 1_E \|_{p\to p}
    \lesssim t^{-n/2} \exp \Big( -\frac{d(E,E')^2}{8t} \Big)
    \sup_{\substack{x\in E \\ y\in E'}} \exp \Big( \frac{|x|^2 + |y|^2}{2} \Big) , \quad 0 < t \leq 1,
  \end{equation*}
  whenever $E,E'\subset\mathbb{R}^n$.
\end{lemma}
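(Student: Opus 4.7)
For the first part, the plan is to invoke Stein's theorem on symmetric contraction semigroups (\cite{Ste70}): since $(e^{-tL})_{t>0}$ is self-adjoint on $L^2(\gamma)$ and positivity-preserving, it extends to a bounded analytic semigroup on $L^p(\gamma)$ for every $1<p<\infty$. Uniform boundedness of $(tLe^{-tL})_{t>0}$ on $L^p(\gamma)$ is equivalent to the analyticity of the semigroup in a sector, so the claim is immediate from this abstract fact.

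For the off-diagonal estimate I would work at the level of kernels. As in the proof of Proposition \ref{pi1lemma}, the operator $tLe^{-tL}$ is an integral operator against $d\gamma(y)$ with kernel $K_t(x,y)=-t\,\partial_t M_t(x,y)$. Differentiating the explicit Mehler formula gives $K_t=M_t\cdot R_t$, where $R_t$ is a sum of three terms: one of size $O(1)$ from the prefactor $(1-e^{-2t})^{-n/2}$, one proportional to $|x-y|^2/t$ from differentiating the $|x-y|^2$-exponential, and one proportional to $t(|x|^2+|y|^2)$ from differentiating the $(|x|^2+|y|^2)$-exponential. For $0<t\leq 1$ the prefactor of $M_t$ is bounded by a constant times $t^{-n/2}$, and the factor $|x-y|^2/t$ is absorbed into $\exp(-|x-y|^2/(Ct))$ after slightly enlarging $C$, using $se^{-\alpha s}\lesssim 1/\alpha$. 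The delicate step is absorbing $t(|x|^2+|y|^2)$ into the \emph{sharp} exponential $\exp((|x|^2+|y|^2)/2)$: the Mehler exponent $e^{-t}/(1+e^{-t})$ attains the value $1/2$ precisely at $t=0$, and the slack $\tfrac12-e^{-t}/(1+e^{-t})$ is of order $t$ on $(0,1]$, which is exactly what is needed---again via $se^{-\alpha s}\leq 1/(e\alpha)$---to produce a uniform constant. This yields the pointwise estimate
$$|K_t(x,y)|\lesssim t^{-n/2}\exp\bigl(-|x-y|^2/(Ct)\bigr)\exp\bigl((|x|^2+|y|^2)/2\bigr),\quad 0<t\leq 1.$$

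To obtain the operator estimate, restrict to $x\in E'$, $y\in E$ and split $|x-y|^2=\tfrac12|x-y|^2+\tfrac12|x-y|^2\geq\tfrac12|x-y|^2+\tfrac12 d(E,E')^2$; choosing $C=4$ extracts the factor $\exp(-d(E,E')^2/(8t))$ while leaving a Gaussian $\exp(-|x-y|^2/(8t))$ in reserve. Factor out the constant $\sup_{E\times E'}\exp((|x|^2+|y|^2)/2)$ and apply Schur's test to the residual convolution-type kernel: both $\int\exp(-|x-y|^2/(8t))\,d\gamma(y)$ and the symmetric quantity in $x$ are bounded by constants times $t^{n/2}$, cancelling the prefactor $t^{-n/2}$. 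This yields uniform $L^1(\gamma)\to L^1(\gamma)$ and $L^\infty(\gamma)\to L^\infty(\gamma)$ bounds for $1_{E'}tLe^{-tL}1_E$, and Riesz--Thorin interpolation gives the claim for $1\leq p\leq 2$ (in fact for all $1\leq p\leq\infty$).

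The main obstacle is the sharp-constant absorption in the pointwise kernel bound: the constant $1/2$ in $\exp((|x|^2+|y|^2)/2)$ is forced by the Mehler kernel at $t=0$ and leaves no room for polynomial corrections, unless one carefully exploits the fact that the slack $\tfrac12-e^{-t}/(1+e^{-t})$ vanishes linearly in $t$---which is precisely what cancels the factor of $t$ introduced by $t\,\partial_t$.
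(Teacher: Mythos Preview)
Your proof is correct, and it is somewhat more self-contained than the paper's own argument, which proceeds almost entirely by citation.

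For the first claim the paper appeals to \cite[Theorem 5.41]{Jan97} for the boundedness of $tLe^{-tL}$ and says the uniformity in $t$ ``follows by careful inspection of the proof''. Your route via Stein's analyticity theorem for symmetric diffusion semigroups is arguably cleaner: bounded analyticity of $(e^{-tL})_{t>0}$ on $L^p(\gamma)$ is exactly equivalent to $\sup_{t>0}\|tLe^{-tL}\|_{p\to p}<\infty$, so uniformity is automatic rather than something one has to extract by inspection.

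For the off-diagonal estimate the paper simply cites the pointwise Mehler-kernel bound from \cite[Lemma~3]{Kem16} and then uses H\"older's inequality. The H\"older step exploits that $\gamma$ is a probability measure: once one knows $|K_t(x,y)|\le C$ uniformly for $x\in E'$, $y\in E$ (with $C$ equal to the right-hand side of the asserted estimate), the operator is bounded $L^1(\gamma)\to L^\infty(\gamma)$ with norm $C$, and the inclusions $L^p\hookrightarrow L^1$ and $L^\infty\hookrightarrow L^p$ give the $L^p\to L^p$ bound immediately. Your argument instead re-derives the pointwise kernel bound from scratch---including the delicate point that the slack $\tfrac12-\tfrac{e^{-t}}{1+e^{-t}}$ vanishes linearly in $t$, which is precisely what the paper's cited lemma contains---and then passes to $L^p$ via the Schur test and Riesz--Thorin. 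Both routes are valid; the paper's H\"older shortcut is shorter once the kernel bound is in hand, while your approach has the advantage of not relying on $\gamma(\mathbb{R}^n)<\infty$ and actually yields the full range $1\le p\le\infty$.
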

\begin{proof}
  The boundedness of $tLe^{-tL}$ on $L^p(\gamma)$ (when $p>1$)
  is the content of \cite[Theorem 5.41]{Jan97}, and the uniformity
  in $t>0$ follows by careful inspection of the proof.
  
  The off-diagonal estimate for $1_{E'}tLe^{-tL}1_E$ is
  an immediate consequence of \cite[Lemma 3]{Kem16} and follows by
  H\"older's inequality.
\end{proof}

Let us then turn to
\begin{equation*}
  \pi_2 f = \int_0^{\widetilde{m}(\cdot)/\kappa} \widetilde{\Phi}(t^2) t^2L
               e^{-\delta' t^2L} (1_{D^c}(\cdot , t) t^2L e^{-\delta t^2L}f) \, 
               \frac{dt}{t} .
\end{equation*}

\begin{prop}
  Let $1 < p \leq 2$.
  For $\kappa \geq 4$ and sufficiently small $\delta, \delta' > 0$ we have
  $\| \pi_2 f \|_p \lesssim \| f \|_p$.
\end{prop}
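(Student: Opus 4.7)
The plan is to exploit the geometric separation built into $\pi_2$: the outer integration runs over $t < \widetilde{m}(x)/\kappa$, while the indicator $1_{D^c}(\cdot,t)$ restricts the inner variable $y$ to $\widetilde{m}(y) \leq t$. For $\kappa \geq 4$ this forces $\widetilde{m}(y) \leq \widetilde{m}(x)/4$, so $|y|$ is significantly larger than $|x|$, placing them in well-separated dyadic annuli. I will then apply the off-diagonal estimate of Lemma \ref{Lpest} and tune $\delta'$ small enough that the resulting Gaussian decay swallows the weight $\exp((|x|^2+|y|^2)/2)$.

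Concretely, I decompose $\mathbb{R}^n$ along the level sets of $\widetilde{m}$: let $B_0 = \{|y|<1\}$ and $B_j = \{2^{j-1} \leq |y| < 2^j\}$ for $j \geq 1$, so $\widetilde{m} \equiv 2^{-j}$ on $B_j$. For $x \in B_k$ the outer restriction $t < 2^{-k}/\kappa$ combined with the inner cutoff $2^{-j} \leq t$ forces $j \geq k + \log_2 \kappa$, hence $j \geq k+2$ and $d(B_k, B_j) \geq 2^{j-1} - 2^k \geq 2^{j-2}$. Splitting $\pi_2 f = \sum_{j} \pi_2^{(j)} f$ with
\begin{equation*}
  \pi_2^{(j)} f(x) = \int_{2^{-j}}^{\widetilde{m}(x)/\kappa} \widetilde{\Phi}(t^2)\, t^2L e^{-\delta' t^2 L}\bigl(1_{B_j} t^2 L e^{-\delta t^2 L} f\bigr)(x) \, \frac{dt}{t},
\end{equation*}
I estimate $\|1_{B_j} t^2 L e^{-\delta t^2 L} f\|_p \lesssim \|f\|_p$ uniformly in $t$ by the first part of Lemma \ref{Lpest}, and bound the outer operator via the second part (with $t$ in the lemma replaced by $\delta' t^2$ here):
\begin{equation*}
  \|1_{B_k} t^2 L e^{-\delta' t^2 L} 1_{B_j}\|_{p\to p} \lesssim (\delta')^{-1-n/2} t^{-n} \exp\Bigl(-\frac{2^{2j-4}}{8\delta' t^2} + \frac{2^{2k}+2^{2j}}{2}\Bigr).
\end{equation*}

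Since $t \leq 1/\kappa$, choosing $\delta' \leq \kappa^2/256$ renders $\delta' t^2 \leq 1/256$, so the first term in the exponent absorbs the second and leaves $-2^{2j-8}/(\delta' t^2)$. Integrating in $t \in (2^{-j}, 2^{-k}/\kappa)$ against the bounded factor $|\widetilde{\Phi}(t^2)| \lesssim 1$ then produces $\|1_{B_k}\pi_2^{(j)} f\|_p \lesssim \exp(-c\,\kappa^2 2^{2j+2k}/\delta')\|f\|_p$ for some $c>0$, and summing geometrically in $j \geq k+2$ together with the disjointness of the $B_k$ yields $\|\pi_2 f\|_p^p = \sum_k \|1_{B_k}\pi_2 f\|_p^p \lesssim \|f\|_p^p$. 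The main obstacle will be precisely the calibration of $\delta'$ against $\kappa$: the Gaussian growth $\exp((|x|^2+|y|^2)/2)$ from Lemma \ref{Lpest} must be dominated by the off-diagonal decay $\exp(-d(B_k,B_j)^2/(8\delta' t^2))$, and this is exactly what requiring $\kappa \geq 4$ and $\delta, \delta'$ small achieves, via the comparison $d(B_k,B_j)^2 \gtrsim 2^{2j} \gtrsim |x|^2+|y|^2$ together with $\delta' t^2 \leq 1/256$.
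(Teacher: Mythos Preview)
Your argument is correct and follows essentially the same strategy as the paper's proof: both exploit the geometric separation forced by $\kappa \geq 4$ between the outer variable (where $t < \widetilde m(x)/\kappa$) and the inner variable (where $t \geq \widetilde m(y)$), then combine the uniform $L^p$-boundedness of $tLe^{-tL}$ with the off-diagonal estimate of Lemma~\ref{Lpest}, and sum the resulting double-exponential decay. The only difference is organizational: the paper partitions the $t$-integral dyadically into $[2^{-k-1},2^{-k}]$ (which automatically localizes $x$ to the ball $B(0,2^{k-2})$ and $y$ to annuli $C_{k+l-1}$), whereas you partition directly in the spatial annuli $B_k$ and $B_j$; these are equivalent parametrizations of the same picture, and the numerology (your $\delta' t^2 \leq 1/256$ versus the paper's $\delta' < 4^{-3}$) matches up.
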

\begin{proof}
  As in \cite[Proposition 5]{Kem16} we have
  \begin{equation}
  \label{pi2dec}
    \| \pi_2 f \|_p 
    \lesssim \sum_{k=2}^\infty \sum_{l=1}^\infty
    \int_{2^{-k-1}}^{2^{-k}} \| 1_{B(0,2^{k-2})} t^2L e^{-\delta' t^2L}
    (1_{C_{k+l-1}} t^2Le^{-\delta t^2L}f) \|_p \, \frac{dt}{t} ,
  \end{equation}
  where $C_{k+l-1} := B(0,2^{k+l-1})\setminus B(0,2^{k+l-2})$.
  
  The distance between $B(0,2^{k-2})$ and $C_{k+l-1}$ is
  at least $2^{k+l-3}$. We make use of Lemma \ref{Lpest} 
  to see that, for $2^{-k-1} < t \leq 2^{-k}$ we have
  \begin{equation*}
    \begin{split}
      &\| 1_{B(0,2^{k-2})} t^2L e^{-\delta' t^2L} (1_{C_{k+l-1}}t^2Le^{-\delta t^2L} f) \|_p \\
      &\lesssim t^{-n} \exp \Big( -\frac{4^{k+l-3}}{8\delta' t^2} \Big)
      \exp \Big( \frac{4^{k-2} + 4^{k+l-1}}{2} \Big) \| t^2Le^{-\delta t^2L} f \|_p \\
      &\lesssim 2^{kn} \exp \Big( -\frac{4^{2k+l-5}}{\delta'} + 4^{k+l-1} \Big) \| f \|_p \\
      &\lesssim \exp (-4^{k+l}) \| f \|_p ,
    \end{split}
  \end{equation*}
  when $\delta' < 4^{-3}$. 
  
  The right-hand side of \eqref{pi2dec} is therefore dominated by
  \begin{equation*}
    \sum_{k=2}^\infty \sum_{l=1}^\infty \exp (-4^{k+l}) \| f \|_p \int_{2^{-k-1}}^{2^{-k}} \frac{dt}{t}
    \lesssim \| f \|_p .
  \end{equation*} 
\end{proof}

Notice that for $p > 1$ the proof was simpler than for $p = 1$
because of the uniform $L^p$-boundedness of $tL e^{-tL}$.

\subsection{Non-admissible part}

We begin by recalling the following key result. 
See \cite[Chapter V]{Jan97} and 
\cite{Gro06} for more references.

\begin{hc}[E. Nelson \cite{Nel73}]
  Let $1 < p \leq q < \infty$. Then
  \begin{equation*}
    \| e^{-tL} \|_{p\to q} \leq 1 , \quad \textup{whenever } t \geq \frac{1}{2}\log \frac{q-1}{p-1} .
  \end{equation*}
\end{hc}
Let us remark, that most proofs of this result use a different
scaling/normalization of the Gaussian measure. The easiest way to
convince oneself of the validity of this version is probably by
the equivalence between hypercontractivity and a logarithmic
Sobolev inequality (see \cite{Gro06}). Also note that
our $L$ is `one half' of a usual Dirichlet form operator.

The following reformulation of the hypercontractivity theorem
will be convenient for us:

\emph{Let $p > 1$. Then for any $t>0$,}
\begin{equation}
  \label{hcreform}
  \| e^{-tL} \|_{p\to q(t)} \leq 1 , \quad 
  \emph{\text{with the hypercontractive exponent }}
  q(t) = 1 + (p-1)e^{2t} .
\end{equation}

Finally, let us consider
\begin{equation*}
  \pi_3f = \int_{\widetilde{m}(\cdot)/\kappa}^\infty 
  \widetilde{\Phi}(t^2) (t^2L)^2 
               e^{-(\delta'+\delta)t^2L}f \, \frac{dt}{t} .
\end{equation*}

\begin{prop}
\label{pi3}
  For sufficiently small $\delta, \delta' > 0$ and large enough $\kappa$ we have:
  \begin{itemize}
    \item If $\Phi$ satisfies Condition D, then $\| \pi_3 f \|_1 \lesssim \| (1 + \log_+ |\cdot |) \, f \|_1$.
    \item If $\Phi$ satisfies Condition P, then $\| \pi_3 f \|_p \lesssim \| f \|_p$ for $1 < p \leq 2$.
  \end{itemize}
\end{prop}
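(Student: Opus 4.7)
Both estimates begin with integration by parts in the $t$-variable, using the identity
\[
  (t^2 L)^2 e^{-c t^2 L}\,\frac{dt}{t} = -\frac{1}{2c^2}\,d\!\left[(c t^2 L + 1)\,e^{-c t^2 L}\right], \qquad c = \delta + \delta'.
\]
The boundary contribution at $\infty$ vanishes for a mean-zero polynomial $f$ (by the $L^2$ spectral gap $\sigma(L)\setminus\{0\}\subset\{1,2,\ldots\}$), yielding
\[
  \pi_3 f = \frac{1}{2c^2}\,\widetilde{\Phi}(a(\cdot)^2)\,(c\, a(\cdot)^2 L + 1)\,e^{-c\, a(\cdot)^2 L} f + \frac{1}{c^2}\int_{a(\cdot)}^\infty t\,\widetilde{\Phi}'(t^2)(c t^2 L + 1)e^{-c t^2 L} f\,dt,
\]
with $a(x) = \widetilde{m}(x)/\kappa$; thus the bulk trades $\widetilde{\Phi}$ for $\widetilde{\Phi}'$.

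For the $L^1$ estimate under Condition~D, I would estimate both pieces pointwise by local Gaussian averages of $|f|$ at the admissible scale. On the annulus $A_k=\{2^{k-1}\leq|x|<2^k\}$ ($k\geq 1$), $c\,a(x)^2\sim 4^{-k}$ is exactly of order $\widetilde{m}(x)^2$; at such a time the Mehler kernel factors (after the weight $\exp(\tfrac{e^{-s}}{1+e^{-s}}(|x|^2+|y|^2))$ offsets the density of $\gamma$) are close to a Euclidean heat kernel of width $\sim\widetilde{m}(x)$, so the boundary piece is dominated in $L^1(\gamma)$ by $\|f\|_1$. In the bulk, the bound $|\widetilde{\Phi}'(t^2)|\lesssim 1/t^2$ coming from (\ref{phibounds}) makes the integrand grow like $1/t$ on $a(x)\leq t\leq 1$, and $\int_{a(x)}^1 dt/t = \log(1/a(x))$ produces precisely the weight $1+\log_+|x|$ on $A_k$; the tail $t\geq 1$ is absolutely integrable by Condition~D.

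For the $L^p$ estimate under Condition~P with $1<p\leq 2$, the decisive new ingredient is Nelson's theorem. Factoring $e^{-sL}=e^{-(s-t_0)L}e^{-t_0 L}$ with $t_0=\tfrac{1}{2}\log\frac{1}{p-1}$, hypercontractivity gives $\|e^{-t_0 L}f\|_2\leq\|f\|_p$, the $L^2$ spectral gap gives $\|e^{-(s-t_0)L}g\|_2\leq e^{-(s-t_0)}\|g\|_2$ for mean-zero $g$, and the embedding $L^2(\gamma)\hookrightarrow L^p(\gamma)$ closes the chain to
\[
  \|e^{-sL}f\|_p\lesssim e^{-s}\|f\|_p \qquad (s>0),
\]
and hence $\|(c t^2 L + 1)e^{-c t^2 L}f\|_p\lesssim e^{-c t^2/2}\|f\|_p$ after pairing with the uniform $L^p$-boundedness of $(sL+1)e^{-sL}$ (Lemma~\ref{Lpest} and spectral calculus). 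This disposes of the $t\geq 1$ portion of the bulk, the polynomial growth of $\widetilde{\Phi}'$ from Condition~P being absorbed by $e^{-ct^2/2}$. For the remaining $t\leq 1$ portion and the boundary term at $t=a(x)$, I would decompose $\R^n$ into the annuli $A_k$, on which $a(x)=2^{-k}/\kappa$ is constant, and apply H\"older's inequality
\[
  \|1_{A_k} g\|_p\leq \gamma(A_k)^{1/p-1/q_k}\|g\|_{q_k}
\]
with $q_k>p$ the largest hypercontractive exponent at the admissible time $ca(x)^2\sim 4^{-k}$, so that the outer semigroup factor in $(c t^2 L+1)e^{-ct^2 L}=(ct^2L+1)e^{-ct^2L/2}\cdot e^{-ct^2L/2}$ still maps $L^p$ into $L^{q_k}$.

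The main obstacle is exactly the \emph{subtle interplay} between admissibility and hypercontractivity advertised in the introduction: the hypercontractive gain is only $q_k-p\sim 4^{-k}$, while $\gamma(A_k)\sim e^{-4^{k-1}}$ has the matching exponential decay, so that $\gamma(A_k)^{1/p-1/q_k}$ is merely of constant order per annulus — right at the threshold of summability. Turning this into a convergent series in $k$ will require choosing $\kappa$ sufficiently large, as well as extracting the extra smallness furnished by the $t$-integration away from the boundary $t=a(x)$ (where $ct^2$ is substantially larger than $4^{-k}/\kappa^2$ and hypercontractivity is correspondingly stronger). This is precisely where the freedom in the technical parameters $\delta,\delta',\kappa$ is used.
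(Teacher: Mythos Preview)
Your overall plan — integration by parts to reduce to a boundary term plus an integral of $e^{-ct^2L}f$ (up to harmless factors), then an annular decomposition in $x$ combined with H\"older and hypercontractivity — matches the paper's structure, and your threshold observation that $\gamma(A_k)^{1/p-1/q_k}$ is merely of constant order is exactly the heart of the matter. However, your proposed mechanism for summing in $k$ does not work, and this is a genuine gap.

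The extra smallness you hope to extract from the $t$-integration away from $t=a(x)$ is real, but it is a gain in the \emph{dyadic time level} $j$ (where $t^2\sim 4^{-k+j}/\kappa^2$), not in $k$: one gets $\gamma(A_k)^{1/p-1/q(k,j)}\lesssim e^{-c\,4^j}$, which sums in $j$ to a constant, still multiplied by the full $\|f\|_p$. Summing these over $k$ diverges. Choosing $\kappa$ large goes the wrong way: it makes the admissible time $c\,a(x)^2=c\,4^{-k}/\kappa^2$ smaller, hence the hypercontractive gain $q_k-p\sim 4^{-k}/\kappa^2$ smaller, and $\gamma(A_k)^{1/p-1/q_k}\sim e^{-c/\kappa^2}$ closer to $1$. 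For the boundary term at $t=a(x)$ there is no $t$-integration to exploit at all, so your hypercontractivity argument gives only $\|1_{A_k}(\text{boundary})\|_p\lesssim \|f\|_p$ for each $k$, which again cannot be summed.

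What is missing is a \emph{spatial} localization of $f$. The paper first splits $f=1_{C_k^*}f+1_{\mathbb{R}^n\setminus C_k^*}f$ on each annulus $C_k$, where $C_k^*$ is a slight thickening. The off-diagonal piece $1_{C_k}e^{-ctL}(1_{\mathbb{R}^n\setminus C_k^*}f)$ is handled by the kernel off-diagonal bound of Lemma~\ref{Lpest} (the distance $d(C_k,\mathbb{R}^n\setminus C_k^*)\sim 2^k$ produces a factor $e^{-c\,4^k}$ that beats everything). For the on-diagonal piece one then runs your $j$-dyadic hypercontractivity argument, but against $1_{C_k^*}f$ instead of $f$; the $j$-sum converges, and the $k$-sum is now $\sum_k\|1_{C_k^*}f\|_p^p\lesssim\|f\|_p^p$ by bounded overlap of the $C_k^*$. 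The boundary terms at $t=\widetilde{m}(\cdot)^2/\kappa^2$ are handled separately (via \cite[Lemma~6]{Kem16}), not by hypercontractivity; this is where the condition ``$\kappa$ large'' is actually used. A smaller issue: for $p=1$ your single integration by parts leaves $t^2Le^{-ct^2L}f$ in the bulk integrand, and $tLe^{-tL}$ is not known to be uniformly bounded on $L^1(\gamma)$; the paper integrates by parts once more so that only $e^{-ctL}f$ remains in the integral.
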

\begin{proof}
  We will consider the two statements side by side.

  \textbf{Part I}:
  Recall the pointwise estimate from \cite[Proposition 7]{Kem16}:
  \begin{equation}
  \label{pi3dec}
    \begin{split}
    | \pi_3 f | &\lesssim \sup_{t>0} |\Phi (t)| \, \Big| (tLe^{-(\delta'+\delta)tL}f)|_{t=\widetilde{m}(\cdot)^2/\kappa^2} \Big| \\
    &\quad + \sup_{t>0} ( |\Phi(t)| + t |\Phi'(t)|) \, \Big| (e^{-(\delta'+\delta)tL}f)|_{t=\widetilde{m}(\cdot)^2/\kappa^2} \Big| \\
    &\quad + \int_{\widetilde{m}(\cdot)^2/\kappa^2}^\infty ( |\Phi'(t)| + t |\Phi''(t)|) \, | e^{-(\delta' + \delta)tL}f | \, dt .
    \end{split}
  \end{equation}
  We will estimate the $L^p$-norms of the three terms on the
  right-hand side separately.
  
  For $1\leq p \leq 2$ and $\kappa$ large enough we have
  \begin{equation*}
    \Big\| (tLe^{-(\delta'+\delta)tL}f)|_{t=\widetilde{m}(\cdot)^2/\kappa^2} \Big\|_p
    + \Big\| (e^{-(\delta'+\delta)tL}f)|_{t=\widetilde{m}(\cdot)^2/\kappa^2} \Big\|_p \lesssim \| f \|_p ,
  \end{equation*}
  by an immediate generalization of \cite[Lemma 6]{Kem16}.
  Together with the general assumption \eqref{phibounds} on
  $\Phi$, this takes care of the first two terms
  of \eqref{pi3dec}.
  
  The range $\int_1^\infty dt$ in the third term in \eqref{pi3dec} 
  is dealt with Conditions D and P separately.
  For $p=1$, Condition D guarantees that
  \begin{equation*}
     \int_1^\infty ( |\Phi'(t)| + t |\Phi''(t)|) \, \| e^{-(\delta'+\delta)tL} f \|_1 \, dt \lesssim \| f \|_1 .
  \end{equation*}
  For $1 < p \leq 2$ we may use interpolation to see that
  $\| e^{-tL} f \|_p \lesssim e^{-\theta_p t} \| f \|_p$ (recall that $f$ is a polynomial with zero mean). 
  Indeed, denoting by $E_0$ the spectral projection onto the kernel of $L$, we have
  $\| e^{-tL} (I-E_0) \|_{2\to 2} = e^{-t}$ and $\| e^{-tL}(I-E_0) \|_{1\to 1} \leq 1$. Hence we obtain the claim with
  $\theta_p = 2 - 2/p$. Now Condition P implies that
  \begin{equation*}
     \int_1^\infty ( |\Phi'(t)| + t |\Phi''(t)|) \, \| e^{-(\delta'+\delta)tL} f \|_p \, dt 
     \lesssim \Big( \int_1^\infty t^N e^{-\theta_p (\delta' + \delta)t} \, dt \Big) \| f \|_p \lesssim \| f \|_p .
  \end{equation*}

  It remains to consider the range 
  $\int_{\widetilde{m}(\cdot)^2/\kappa^2}^1 dt$ in the third term in
  \eqref{pi3dec}. By the assumption \eqref{phibounds},
  $\sup_{0 < t \leq 1} (t|\Phi'(t)| + t^2|\Phi''(t)|) < \infty$,
  and so for $1\leq p \leq 2$ we have
  \begin{equation*}
    \Big\| \int_{\widetilde{m}(\cdot)^2/\kappa^2}^1 ( |\Phi'(t)| + t |\Phi''(t)|) \, | e^{-(\delta' + \delta)tL}f | \, dt \Big\|_p
    \lesssim \Big\| \int_{\widetilde{m}(\cdot)^2/\kappa^2}^1 
    | e^{-(\delta' + \delta)tL}f | \, \frac{dt}{t} \Big\|_p ,
  \end{equation*}
  which is analyzed further below.

  \textbf{Part II (setup)}:
  We will then examine the remaining integral
  over annuli and separate the off-diagonal and on-diagonal parts.
  More precisely, let us write
  $C_0 = B(0,1)$ and $C_k = B(0,2^k)\setminus B(0,2^{k-1})$ for 
  $k\geq 1$, and
  let $C_0^* = B(0,2)$, $C_1^* = B(0,4)$, and
  $C_k^* = B(0,2^{k+1})\setminus B(0,2^{k-2})$ for $k\geq 2$.
  Then, for $1\leq p \leq 2$, we have
  \begin{equation}
  \label{partIIdec}
    \begin{split}
    \Big\| \int_{\widetilde{m}(\cdot)^2/\kappa^2}^1 | e^{-(\delta' + \delta)tL}f| \, \frac{dt}{t} \Big\|_p^p
    &= \sum_{k=0}^\infty \Big\| 1_{C_k} \int_{4^{-k}/\kappa^2}^1 |e^{-(\delta' + \delta) tL} f| \, \frac{dt}{t} \Big\|_p^p \\
    &\lesssim \sum_{k=0}^\infty \Big( \int_{4^{-k}/\kappa^2}^1 \| 1_{C_k} e^{-(\delta' + \delta)tL} 
    (1_{\R^n \setminus C_k^*}f) \|_p \, \frac{dt}{t} \Big)^p \\
    &\quad\quad + \sum_{k=0}^\infty \Big\| 1_{C_k} \int_{4^{-k}/\kappa^2}^1 |e^{-(\delta' + \delta)tL} (1_{C_k^*}f) | \, \frac{dt}{t} \Big\|_p^p .
    \end{split}
  \end{equation}

  \textbf{Part II (off-diagonal terms)}:
  Let $1\leq p \leq 2$ for the time being.
  We choose $\delta, \delta' > 0$ such that 
  $8(\delta' + \delta) \leq 4^{-3}$
  and take care of the first two annuli with $k = 0,1$ simply by
  \begin{equation*}
    \int_{4^{-k}/\kappa^2}^1 \| 1_{C_k} e^{-(\delta' + \delta)tL} (1_{\R^n\setminus C_k^*}f) \|_p \, \frac{dt}{t}
    \leq \Big( \int_{4^{-k}/\kappa^2}^1 \frac{dt}{t} \Big) \| f \|_p \lesssim (k + 1) \| f \|_p .
  \end{equation*}
  
  For the general case with $k \geq 2$ we write
  \begin{equation*}
    \mathbb{R}^n \setminus C_k^* = B(0,2^{k-2}) \cup \bigcup_{l=2}^\infty 
    C_{k+l} .
  \end{equation*}
  Observing that $d(C_k , B(0,2^{k-2})) = 2^{k-2}$, we use
  Lemma \ref{Lpest} to obtain for $t \leq 1$ the estimate
  \begin{equation*}
    \begin{split}
    \| 1_{C_k} e^{-(\delta' + \delta)tL} 1_{B(0,2^{k-2})} \|_{p\to p}
    &\lesssim 2^{kn} \exp \Big( - \frac{4^{k-2}}{8(\delta' + \delta) t} \Big)
    \exp \Big( \frac{4^k + 4^{k-2}}{2} \Big) \\
    &\leq 2^{kn} \exp ( - 4^{k+1} + 4^k ) \\
    &\lesssim \exp (-4^k) .
    \end{split}
  \end{equation*} 
  Furthermore, since $d(C_k , C_{k+l}) = 2^{k+l-2}$,
  Lemma \ref{Lpest} implies that for $t \leq 1$ we have
  \begin{equation*}
  \begin{split}
     \| 1_{C_k} e^{-(\delta' + \delta)tL} 1_{C_{k+l}} \|_{p\to p}
     &\lesssim 2^{kn} \exp \Big( -\frac{4^{k+l-2}}{8(\delta' + \delta)t}\Big) \exp \Big( \frac{4^k + 4^{k+l}}{2} \Big) \\
     &\leq 2^{kn} \exp ( -4^{k+l+1} + 4^{k+l}) \\
     &\lesssim \exp (-4^{k+l}) .
     \end{split}
  \end{equation*}
  
  We are now ready to estimate the off-diagonal terms for $k\geq 2$:
  \begin{equation*}
    \begin{split}
    &\int_{4^{-k}/\kappa^2}^1 \| 1_{C_k} e^{-(\delta' + \delta)tL} (1_{\R^n \setminus C_k^*}f) \|_p \, \frac{dt}{t} \\
    &\leq \int_{4^{-k}/\kappa^2}^1 \| 1_{C_k} e^{-(\delta' + \delta)tL} (1_{B(0,2^{k-2})}f) \|_p \, \frac{dt}{t} \\
    &\quad + \sum_{l=2}^\infty 
    \int_{4^{-k}/\kappa^2}^1 \| 1_{C_k} e^{-(\delta' + \delta)tL} (1_{C_{k+l}}f) \|_p \, \frac{dt}{t} \\
    &\lesssim \Big( \int_{4^{-k}/\kappa^2}^1 \frac{dt}{t} \Big) \exp(-4^k) \| f \|_p
    + \Big( \int_{4^{-k}/\kappa^2}^1 \frac{dt}{t} \Big) \sum_{l=2}^\infty \exp(-4^{k+l}) \| f \|_p \\
    &\lesssim (k+1) \exp(-4^k) \| f \|_p
    \end{split}
  \end{equation*}
  so that the sum of the off-diagonal terms in \eqref{partIIdec} 
  is under control
  \begin{equation*}
    \sum_{k=0}^\infty \Big( \int_{4^{-k}/\kappa^2}^1 \| 1_{C_k} e^{-(\delta' + \delta)tL} (1_{\R^n \setminus C_k^*}f) \|_p \, \frac{dt}{t} \Big)^p \lesssim \| f \|_p^p .
  \end{equation*}
  
  \textbf{Part II (on-diagonal terms)}:
  We then consider the on-diagonal terms in \eqref{partIIdec}.
  
  Let us begin with $p=1$ and estimate for $k\geq 0$ 
  simply as follows:
  \begin{equation*}
  \begin{split}
    \Big\| 1_{C_k} \int_{4^{-k}/\kappa^2}^1 | e^{-(\delta' + \delta)tL} (1_{C_k^*}f) | \, \frac{dt}{t} \Big\|_1
    &\leq \int_{4^{-k}/\kappa^2}^1 \| 1_{C_k} e^{-(\delta' + \delta)tL} (1_{C_k^*}f) \|_1 \, \frac{dt}{t} \\
    &\leq \Big( \int_{4^{-k}/\kappa^2}^1 \frac{dt}{t} \Big) \| 1_{C_k^*} f \|_1\\
    &\lesssim (k + 1) \, \| 1_{C_k^*} f \|_1 .
    \end{split}
  \end{equation*}
  For the sum of the on-diagonal terms we then obtain
  \begin{equation*}
    \sum_{k=0}^\infty \Big\| 1_{C_k} \int_{4^{-k}/\kappa^2}^1 |e^{-(\delta' + \delta)tL}(1_{C_k^*}f)| \, \frac{dt}{t} \Big\|_1
    \lesssim \sum_{k=0}^\infty (k + 1) \, \| 1_{C_k^*} f \|_1
    \eqsim \| (1 + \log_+|\cdot |)\, f \|_1 ,
  \end{equation*}
  as required.
 
  Let then $p > 1$ and 
  choose $\kappa$ to be a power of $4$ and write 
  $N(k) = k-1+ 2\log_4 \kappa$ so that $4^{-k+N(k)+1}/\kappa^2 = 1$ for each $k\geq 0$. We start by partitioning the time integral as
  follows:
  \begin{equation}
  \label{ondiagdec}
    \begin{split}
    &\sum_{k=0}^\infty \Big\| 1_{C_k} \int_{4^{-k}/\kappa^2}^1 |e^{-(\delta' + \delta)tL}(1_{C_k^*}f)| \, \frac{dt}{t} \Big\|_p^p \\
    &= \sum_{k=0}^\infty \sum_{j=0}^{N(k)} \Big\| 1_{C_k} \int_{4^{-k+j}/\kappa^2}^{4^{-k+j+1}/\kappa^2}
    |e^{-(\delta' + \delta)tL}(1_{C_k^*}f)| \, \frac{dt}{t} \Big\|_p^p \\
    &\leq \sum_{k=0}^\infty \sum_{j=0}^{N(k)} \Big( \int_{4^{-k+j}/\kappa^2}^{4^{-k+j+1}/\kappa^2}
    \| 1_{C_k} e^{-(\delta' + \delta)tL}(1_{C_k^*}f) \|_p \, \frac{dt}{t} \Big)^p .
    \end{split}
  \end{equation}
  For each $k,j\geq 0$ let us denote by $q(k,j)$ the hypercontractive exponent (cf. \eqref{hcreform}) from time 
  $(\delta' + \delta) 4^{-k+j} / \kappa^2$, i.e.
  $q(k,j) = 1 + (p-1)e^{2(\delta' + \delta) 4^{-k+j}/\kappa^2}$. Then, using H\"older's inequality, we have for
  $t \geq 4^{-k+j}/\kappa^2$,
  \begin{equation*}
    \| 1_{C_k} e^{-(\delta' + \delta) tL} (1_{C_k^*}f) \|_p
    \leq \gamma(C_k)^{\frac{1}{p} - \frac{1}{q(k,j)}} \| 1_{C_k} e^{-(\delta' + \delta)tL} (1_{C_k^*}f) \|_{q(k,j)}
    \lesssim e^{-c4^j} \| 1_{C_k^*} f \|_p ,
  \end{equation*}
  where the decay factor from the last inequality will be justified next. Firstly,
  \begin{equation*}
    \gamma (C_k) \lesssim \int_{2^{k-1}}^\infty e^{r^2} r^{n-1} \, dr \lesssim e^{-c4^k} .
  \end{equation*}
  Secondly,
  \begin{equation*}
    \frac{1}{p} - \frac{1}{q(k,j)} = \frac{p-1}{p} 
    \frac{e^{2(\delta' + \delta)4^{-k+j}/\kappa^2} - 1}{1 + (p-1)e^{2(\delta' + \delta)4^{-k+j}/\kappa^2}}
    \gtrsim e^{2(\delta' + \delta) 4^{-k+j}/\kappa^2} - 1 \gtrsim 4^{-k+j} .
  \end{equation*}
  Hence,
  \begin{equation*}
    \gamma (C_k)^{\frac{1}{p} - \frac{1}{q(k,j)}} \lesssim (e^{-c4^k})^{4^{-k+j}} \lesssim e^{-c4^j},
  \end{equation*}
  as was claimed.
  
  Returning to the sum of the on-diagonal terms in
  \eqref{ondiagdec},
  \begin{equation*}
  \begin{split}
    &\sum_{k=0}^\infty \sum_{j=0}^{N(k)} \Big( \int_{4^{-k+j}/\kappa^2}^{4^{-k+j+1}/\kappa^2}
    \| 1_{C_k} e^{-(\delta' + \delta)tL}(1_{C_k^*}f) \|_p \, \frac{dt}{t} \Big)^p \\
    &\lesssim \sum_{k=0}^\infty \| 1_{C_k^*} f \|_p^p 
    \sum_{j=0}^{N(k)} \Big( \int_{4^{-k+j}/\kappa^2}^{4^{-k+j+1}/\kappa^2} \frac{dt}{t} \Big)^p e^{-cp4^j} \\
    &\lesssim \sum_{k=0}^\infty \| 1_{C_k^*} f \|_p^p \lesssim \| f \|_p^p .
    \end{split}
  \end{equation*}
  This finishes the proof.
\end{proof}

\begin{remark}
  As is clear from the proof above, if one could show that
  there exists an $\alpha > 1$ such that for all $k\geq 0$ and
  all $0\leq j \leq N(k)$,
  \begin{equation*}
    \| 1_{C_k} e^{-tL} (1_{C_k^*}f) \|_1
    \lesssim j^{-\alpha} \, \| 1_{C_k^*} f \|_1 , \quad t\gtrsim
    4^{-k+j},
  \end{equation*}
  then the desired inequality $\| \pi_3 f \|_1 \lesssim \| f \|_1$
  would follow (for multipliers satisfying Condition D).
\end{remark}

\begin{acks}
  The author would like to thank the organizers of the Probabilistic
  Aspects of Harmonic Analysis conference in Bedlewo, Poland, in
  May 2016. The key steps of the proof were completed in
  the inspiring environment of the Banach Center.
\end{acks}


\begin{thebibliography}{99}

\bibitem[1]{AK15}
A.~Amenta and M.~Kemppainen.
\newblock Non-uniformly local tent spaces.
\newblock {\em Publ. Mat.}, 59(1):245--270, 2015.

\bibitem[2]{AHM12}
P.~Auscher, S.~Hofmann, and J.~M. Martell.
\newblock Vertical versus conical square functions.
\newblock {\em Trans. Amer. Math. Soc.}, 364(10):5469--5489, 2012.

\bibitem[3]{CD13}
A.~Carbonaro and O.~Dragi{\v{c}}evi{\'{c}}.
\newblock Functional calculus for generators of symmetric contraction
  semigroups.
\newblock 2013.
\newblock arXiv:1308.1338.

\bibitem[4]{Cow83}
M.~G. Cowling.
\newblock Harmonic analysis on semigroups.
\newblock {\em Ann. of Math. (2)}, 117(2):267--283, 1983.

\bibitem[5]{Gro06}
L.~Gross.
\newblock Hypercontractivity, logarithmic {S}obolev inequalities, and
  applications: a survey of surveys.
\newblock In {\em Diffusion, quantum theory, and radically elementary
  mathematics}, volume~47 of {\em Math. Notes}, pages 45--73. Princeton Univ.
  Press, Princeton, NJ, 2006.

\bibitem[6]{Jan97}
S.~Janson.
\newblock {\em Gaussian {H}ilbert spaces}, volume 129 of {\em Cambridge Tracts
  in Mathematics}.
\newblock Cambridge University Press, Cambridge, 1997.

\bibitem[7]{JMP14}
M.~Junge, T.~Mei, and J.~Parcet.
\newblock An invitation to harmonic analysis associated with semigroups of
  operators.
\newblock In {\em Harmonic analysis and partial differential equations}, volume
  612 of {\em Contemp. Math.}, pages 107--122. Amer. Math. Soc., Providence,
  RI, 2014.

\bibitem[8]{Kem16}
M.~Kemppainen.
\newblock An ${L}^1$-estimate for certain spectral multipliers associated with
  the {O}rnstein--{U}hlenbeck operator.
\newblock {\em J. Fourier Anal. Appl.}, (to appear), 2016.

\bibitem[9]{MvNP11}
J.~Maas, J.~van Neerven, and P.~Portal.
\newblock Conical square functions and non-tangential maximal functions with
  respect to the {G}aussian measure.
\newblock {\em Publ. Mat.}, 55(2):313--341, 2011.

\bibitem[10]{MvNP12}
J.~Maas, J.~van Neerven, and P.~Portal.
\newblock Whitney coverings and the tent spaces {$T^{1,q}(\gamma)$} for the
  {G}aussian measure.
\newblock {\em Ark. Mat.}, 50(2):379--395, 2012.

\bibitem[11]{MM07}
G.~Mauceri and S.~Meda.
\newblock {${\rm BMO}$} and {$H^1$} for the {O}rnstein-{U}hlenbeck operator.
\newblock {\em J. Funct. Anal.}, 252(1):278--313, 2007.

\bibitem[12]{Mei08}
T.~Mei.
\newblock Tent spaces associated with semigroups of operators.
\newblock {\em J. Funct. Anal.}, 255(12):3356--3406, 2008.

\bibitem[13]{Nel73}
E.~Nelson.
\newblock The free {M}arkoff field.
\newblock {\em J. Funct. Anal.}, 12(2):211--227, 1973.

\bibitem[14]{Por14}
P.~Portal.
\newblock Maximal and quadratic {G}aussian {H}ardy spaces.
\newblock {\em Rev. Mat. Iberoam.}, 30(1):79--108, 2014.

\bibitem[15]{Sjo97}
P.~Sj{\"o}gren.
\newblock Operators associated with the {H}ermite semigroup---a survey.
\newblock In {\em Proceedings of the conference dedicated to {P}rofessor
  {M}iguel de {G}uzm\'an ({E}l {E}scorial, 1996)}, volume~3, pages 813--823,
  1997.

\bibitem[16]{Ste70}
E.~M. Stein.
\newblock {\em Topics in harmonic analysis related to the {L}ittlewood-{P}aley
  theory.}
\newblock Annals of Mathematics Studies, No. 63. Princeton University Press,
  Princeton, N.J., 1970.

\end{thebibliography}
\def\cprime{$'$} \def\cprime{$'$}

\end{document}